\let\reftagform@=\tagform@
\def\tagform@#1{\maketag@@@{(\ignorespaces\textcolor{blue}{#1}\unskip\@@italiccorr)}}
\renewcommand{\eqref}[1]{\textup{\reftagform@{\ref{#1}}}}
\newcommand{\vertiii}[1]{{\left\vert\kern-0.25ex\left\vert\kern-0.25ex\left\vert #1
    \right\vert\kern-0.25ex\right\vert\kern-0.25ex\right\vert}}
\newtheorem{theorem}{Theorem}[section]
\newtheorem{lemma}[theorem]{Lemma}
\newtheorem{corollary}[theorem]{Corollary}
\theoremstyle{definition}
\theoremstyle{remark}
\newtheorem{remark}[theorem]{Remark}
\begin{document}
\setcounter{page}{1}

\title[Some Generalizations of Mercer inequality  ]{Some Generalizations of Mercer inequality and its operator extensions }

\author[M. Kian  and Z. Peymani]{Mohsen Kian \MakeLowercase{and}   Zainab  Peymani Mazraj}

\address{Department of Mathematics, University of Bojnord, P.O. Box
1339, Bojnord 94531, Iran}
\email{kian@ub.ac.ir }
\email{zpeymani168@gmail.com}

\subjclass[2020]{Primary: 26D15, 47A63 }

\keywords{Mercer inequality, superquadratic function,  positive operator,  Hermite--Hadamard inequality}

\begin{abstract}
We study the Mercer inequality and its operator extension for  superquadratic functions.  In particular, we give a more general form of the Mercer inequality by replacing some constants by positive operators.  As some consequences, our results produce a Jensen operator inequality for superquadratic functions.  Moreover, we present  some Mercer inequalities of Hermite--Hadamard's type.

\end{abstract}

\maketitle

\section{Introduction and Preliminaries}
Mercer \cite{Mercer} proved a variant of the Jensen inequality for  convex functions as follows: If $f:[m,M]\to\mathbb{R}$ is a convex function, then
\begin{align}\label{mercer}
f\left( M+m-\sum_{j=1}^{n}\lambda_j x_j\right)\leq f(M)+f(m)-\sum_{j=1}^{n}\lambda_j f(x_j)
\end{align}
for all $x_j\in[m,M]$ and all $\lambda_j\in[0,1]$ with $\sum_{j=1}^{n}\lambda_j=1$. An operator extension of \eqref{mercer} has been presented in \cite{MPP}:
\begin{align}\label{op-mercer}
f\left( M+m-\sum_{j=1}^{n}\Phi_j(A_j)\right)\leq f(M)+f(m)-\sum_{j=1}^{n}\Phi_j(f(A_j))
\end{align}
in which $(\Phi_1,\dots,\Phi_n)$ is a unital tuple of positive linear maps on $\mathcal{B}(\mathcal{H})$ and $A_j$'s are self-adjoint operators with spectra in $[m,M]$.
Here we denote by $\mathcal{B}(\mathcal{H})$ the algebra of all bounded linear operators on a Hilbert space $\mathcal{H}$. When $f:[m,M]\to\mathbb{R}$ is  a continuous function and    $A$ is a  self-adjoint operator  with spectrum in $[m,M]$, the operator $f(A)$ is defined by  the well-known Gelfand's mapping. This is called the continuous functional calculus.

Utilising the famous Hermite--Hadamard inequality, we presented  \cite{KM} a variant of the operator Mercer inequality \eqref{op-mercer}. Some reverse Mercer operator inequalities have been given in \cite{An2}. Some majorization type Mercer inequalities were established in \cite{niz2,niz}. A continuous  form of \eqref{op-mercer} can be found in \cite{IPM}.

 In \cite{MMK}, we extended the operator Mercer inequality \eqref{op-mercer}  by replacing the scalars $m$ and $M$ by operators and showed that with some conditions on the spectra of operators, the inequality
 \begin{align}\label{mmk}
   f(\Phi(C))+f(\Phi(B))\leq \Phi(f(A))+\Phi(f(D))
 \end{align}
holds.

Superquadratic functions are  introduced in \cite{ajs} as modifications of convex functions. Since then, this class of functions has been utilised to improve many results concerning convex functions.   A function $f:[0,\infty)\to\mathbb{R}$  is called superquadratic if for all $x\geq0$ there exists a constant $C_x\in\mathbb{R}$  such that
  \begin{eqnarray*}
    f(y)\geq f(x)+C_x(y-x)+f(|y-x|)
  \end{eqnarray*}
for all $y\geq0$. These functions enjoy a Jensen type inequality as
\begin{align}\label{j-s}
f(\lambda x+(1-\lambda)y)\leq \lambda f(x)+(1-\lambda)f(y)-\lambda f((1-\lambda)|x-y|)-(1-\lambda)f(\lambda|x-y|)
\end{align}
for all $x,y\geq0$ and $\lambda\in[0,1]$, see \cite{ajs}.

The authors of \cite{BMJ} proved a variant of the operator Mercer inequality \eqref{op-mercer} for superquadratic functions: If $f:[0,\infty)\to\mathbb{R}$ is a continuous superquadratic function and $m,M$ are positive scalars, then
\begin{align}\label{op-me-s}
  f(M+m-\Phi(A))-\beta(\Phi(A))\leq f(m)+f(M)-\Phi(f(A))-\Phi(\beta(A))
\end{align}
holds for every positive operator $A\in\mathcal{B}(\mathcal{H})$   with spectrum in $[m,M]$ and every positive linear map $\Phi$ on $\mathcal{B}(\mathcal{H})$, when we set the notation
\begin{align}\label{beta}
\beta(t) = \dfrac{t -m}{M - m}f(M - t) +\dfrac{M-t}{M - m}f(t - m),\qquad (t\in[m,M]).
  \end{align}
 Recall that an operator $A\in\mathcal{B}(\mathcal{H})$ is called positive if $\langle A\eta,\eta\rangle\geq0$ for every $\eta\in \mathcal{H}$.

The importance of \eqref{op-mercer}, \eqref{mmk} and \eqref{op-me-s} is that they are available without a restrictive condition on the function $f$. It is known that if $f:[m,M]\to\mathbb{R}$ is operator convex, then the Jensen operator inequality $f(\Phi(A))\leq \Phi(f(A))$ holds for every self-adjoint operator $A\in\mathcal{B}(\mathcal{H})$ with spectrum in $[m,M]$ and every unital positive linear map $\Phi$ on $\mathcal{B}(\mathcal{H})$. However, if $f$ is convex, but not operator convex, then this inequality does not hold in general. However, \eqref{op-mercer} is valid for every convex function (\eqref{op-me-s} is valid for every superquadratic function). Some related results can be found in \cite{An}.

In this paper, we  study the Mercer inequality and its operator extension for superquadratic functions. In particular, we extend \eqref{op-me-s} by replacing scalars $m,M$ by operators. As applications, a Jensen operator inequality has been presented for superquadratic functions.  Moreover, we present a Mercer inequality of   Hemite--Hadamard's type.

\section{Main Result}

We begin by presenting a Mercer inequality of   Hemite--Hadamard's type for superquadratic functions. We need the following lemma. For simplicity, we use the notation $x\nabla_\lambda y$ for the $\lambda$-weighted arithmetic mean of $x$ and $y$.
\begin{lemma}\label{lem1}
 Let $0\leq m < M$ and let $f : [0,\infty)\to \mathbb{R}$ be  a  superquadratic function.  Then
  \begin{align*}
&f(m + M -  x\nabla_\lambda y)+2 \beta(x)\nabla_\lambda \beta(y)\\
&\qquad\leqslant f(m) + f(M)
  -f(x)\nabla_\lambda f(y)
  - f((1 - \lambda)|x - y|)\nabla_\lambda f(\lambda|x - y|))
  \end{align*}
for all $x, y \in  [m, M]$ and every $\lambda\in[0,1]$.
  \end{lemma}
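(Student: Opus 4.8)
The plan is to reduce the statement to the two–point superquadratic Jensen inequality \eqref{j-s}, exploiting the fact that the correction terms it produces are exactly the quantity $\beta$ defined in \eqref{beta}.

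First I would record two scalar estimates valid for every $t\in[m,M]$. Writing $t$ as the convex combination $t=\frac{M-t}{M-m}\,m+\frac{t-m}{M-m}\,M$ and applying \eqref{j-s} with the two endpoints $m,M$ and weight $\frac{M-t}{M-m}$, the two correction terms become $\frac{M-t}{M-m}f(t-m)$ and $\frac{t-m}{M-m}f(M-t)$, whose sum is precisely $\beta(t)$. This gives
\begin{align*}
f(t)+\beta(t)\leq \frac{M-t}{M-m}\,f(m)+\frac{t-m}{M-m}\,f(M).
\end{align*}
Running the same argument on $M+m-t=\frac{t-m}{M-m}\,m+\frac{M-t}{M-m}\,M$ yields the companion estimate
\begin{align*}
f(M+m-t)+\beta(t)\leq \frac{t-m}{M-m}\,f(m)+\frac{M-t}{M-m}\,f(M),
\end{align*}
where the correction terms again assemble into the same $\beta(t)$.

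The key step, and what I expect to be the main obstacle, is choosing how to split $m+M-(x\nabla_\lambda y)$. Since $m+M-(x\nabla_\lambda y)=(m+M-x)\nabla_\lambda(m+M-y)$ is a genuine $\lambda$-convex combination of the two points $m+M-x$ and $m+M-y$ of $[m,M]$, and since $|(m+M-x)-(m+M-y)|=|x-y|$, applying \eqref{j-s} to this pair produces
\begin{align*}
f\big(m+M-(x\nabla_\lambda y)\big)\leq f(m+M-x)\nabla_\lambda f(m+M-y)-f((1-\lambda)|x-y|)\nabla_\lambda f(\lambda|x-y|).
\end{align*}
This is where the mixed correction term of the statement is generated; recognizing that the natural splitting is over $m+M-x$ and $m+M-y$ (rather than over $m$ and $M$ directly) is the crux of the argument, since it is what forces the argument $|x-y|$ inside $f$.

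Finally I would combine the pieces. Bounding $f(m+M-x)$ and $f(m+M-y)$ in the last display by the companion estimate (at $t=x$ and $t=y$) gives an upper bound for $f\big(m+M-(x\nabla_\lambda y)\big)+f((1-\lambda)|x-y|)\nabla_\lambda f(\lambda|x-y|)$ in terms of $f(m),f(M),\beta(x),\beta(y)$. Adding to this the inequality obtained by bounding $f(x)\nabla_\lambda f(y)$ through the first scalar estimate at $t=x$ and $t=y$, a direct check shows that the coefficients of $f(m)$ and of $f(M)$ each collapse to $1$, while the four $\beta$-contributions combine into $-2\big(\beta(x)\nabla_\lambda\beta(y)\big)$. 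Rearranging yields exactly the claimed inequality. The only genuine bookkeeping is verifying these coefficient cancellations; the superquadratic hypothesis enters solely through \eqref{j-s}.
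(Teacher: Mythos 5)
Your proof is correct and follows essentially the same route as the paper: both hinge on applying \eqref{j-s} to the decomposition $m+M-x\nabla_\lambda y=(m+M-x)\nabla_\lambda(m+M-y)$, together with endpoint estimates at $m,M$ whose correction terms assemble into $\beta$. The only organizational difference is that the paper first merges your two scalar estimates into the pointwise Mercer inequality $f(M+m-t)\le f(m)+f(M)-f(t)-2\beta(t)$ and then substitutes, whereas you keep them separate and cancel coefficients at the end.
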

    \begin{proof}
    For any  $x \in [m, M]$,  we  put $y = m + M - x$ so that $y + x = m + M$. There  exists
$\lambda\in [0, 1]$ such that $y = \lambda m + (1- \lambda)M$. Since $f$ is superquadratic, we have
 \begin{align}\label{w2}
&f(M + m - x)=f(\lambda m + (1 -\lambda)M)\notag\\
&\leqslant \lambda f(m) + (1 - \lambda)f(M) - \lambda f((1 - \lambda)|m - M|) - (1 - \lambda)f(\lambda|m - M|)\\
&=f(m) + f(M) -\Big( \lambda f(M) + (1 - \lambda)f(m) + \lambda f((1 - \lambda)|M - m|)+ (1 - \lambda)f(\lambda|M- m|) \Big)\notag.
  \end{align}
On the other hand, $x=m+M-y= \lambda M + (1- \lambda)m$ and so we have
 \begin{align}\label{w1}
f(x)&=f(\lambda M + (1- \lambda)m)\notag\\
&\leq \lambda f(M) + (1 - \lambda)f(m) - \lambda f((1 - \lambda)|M - m|)- (1 - \lambda)f(\lambda|M- m|).
  \end{align}
  It follows from \eqref{w2} and \eqref{w1} that
  \begin{align}\label{w3}
f(M + m - x)\leq f(m) + f(M) -f(x)-2\left(\lambda f((1 - \lambda)|M - m|)+(1 - \lambda)f(\lambda|M- m|)\right).
  \end{align}
  Applying \eqref{w3}
  with $\lambda =\dfrac{x -m}{M - m}$  we obtain
\begin{equation}\label{eq1.0}
f(M + m - x) \leqslant f(m) + f(M) - f(x) - 2\beta(x),
\end{equation}
in which
\begin{align}\label{beta}
\beta(x) = \dfrac{x -m}{M - m}f(M - x) +\dfrac{M-x}{M - m}f(x - m),\qquad (x\in[m,M]).
  \end{align}
  Now, for every  $x, y \in [m, M]$ and every $\lambda\in[0,1]$ we have
    \begin{align}\label{eq2.0}
 f (M + m - (\lambda x + (1 - \lambda)y))  &= f(\lambda(m + M - x) + (1 - \lambda)(m + M - y))\nonumber\\
&\leq  \lambda f(m + M - x) + (1 - \lambda)f(m + M - y)\\
&\quad - \lambda f((1 - \lambda)|x - y|) - (1 - \lambda)f(\lambda|x - y|) \nonumber
  \end{align}
  as $f$ is superquadratic. Now, by applying  \eqref{eq1.0}  we get
  \begin{align}\label{eq3.0}
&\lambda f(m + M - x) + (1 - \lambda)f(m + M - y)\nonumber\\
&\quad \leqslant  \lambda(f(m) + f(M) - f(x) - 2\beta(x))
+ (1 - \lambda) (f(m) + f(M) - f(y) - 2\beta( y))\nonumber\\
&\quad=f(m) + f(M) - (\lambda f(x) + (1 - \lambda)f(y))- 2 (\lambda \beta( x) + (1 - \lambda)\beta(y)).
  \end{align}
 It follows from  (\ref{eq2.0}), (\ref{eq3.0}) that
    \begin{align*}
&f(m + M - (\lambda x + (1 - \lambda)y)\\
&\quad \leq f(m) + f(M) - (\lambda f(x) + (1 - \lambda)f(y)) - 2(\lambda \beta(x) + (1 - \lambda)\beta(y))\\
&\qquad - (\lambda f((1 - \lambda)|x - y|)
+(1 - \lambda)f(\lambda |x - y|)),
   \end{align*}
as required.
    \end{proof}
    %--------------------------------------------------------------
 In the next result, we present a Mercer inequality of Hermite--Hadamard's type for
 superquadratic functions.  The reader may compare it to \cite[Theorem 2.1]{KM}.
 \begin{theorem}\label{th-main}
      Let $0\leq m < M$ and let $f$ be a superquadratic function on $[m, M]$. Then
   {\small  \begin{align}\label{main1}
    \begin{split}
    &f\left(m+M-\dfrac{x+y}{2}\right)+2\int_{0}^{1/2}f\left(u|x-y|\right)\mathrm{d}u\\
    &\qquad\leq \frac{1}{y-x}\int_{x}^{y}f(m+M-u)\mathrm{d}u\\
    &\qquad\leq f(m) + f(M)-\frac{f(x)+f(y)}{2}-(\beta(x)+\beta(y))
   -2\int_{0}^{1}(1 - u)f(u |x - y|)\mathrm{d}u
     \end{split}
   \end{align}  }
   and
  {\small  \begin{align}\label{main2}
    \begin{split}
    &f\left(m+M-\dfrac{x+y}{2}\right)+  2\int_{0}^{1/2}f(u|x-y|)\mathrm{d}u\\
    &\qquad\leq f(m) + f(M)-\dfrac{1}{y-x}\int  _ x^y
(f(u)+2\beta (u))\mathrm{d}u \\
    &\qquad\leq f(m) + f(M)-f\left(\frac{x+y}{2}\right)-\dfrac{2}{y-x}\int  _ x^y\beta (u)\mathrm{d}u -2\int_{0}^{1/2}f(u|x-y|)\mathrm{d}u
     \end{split}
   \end{align}  }
   for all $x,y\in[m,M]$.
 \end{theorem}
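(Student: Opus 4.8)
The plan is to reduce the entire statement to two ingredients, one already proved and one elementary: Lemma \ref{lem1} together with the pointwise inequality \eqref{eq1.0} obtained inside its proof supply the \emph{upper} bounds, while a midpoint (left-hand) Hermite--Hadamard inequality for superquadratic functions supplies the \emph{lower} bounds. Each of the four nontrivial inequalities in \eqref{main1}--\eqref{main2} then follows by integrating a pointwise inequality in the weight $\lambda$ and performing routine substitutions; throughout one notes that $x,y,\tfrac{x+y}{2}$ and $m+M-\tfrac{x+y}{2}$ all lie in $[m,M]\subseteq[0,\infty)$, so superquadraticity is applicable, and one may assume $x<y$ so that $|x-y|=y-x$.

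The crux is to first establish that, for a superquadratic $f$ on $[0,\infty)$ and $0\le a<b$,
\[
f\Big(\tfrac{a+b}{2}\Big)+2\int_0^{1/2}f\big(u(b-a)\big)\,\mathrm{d}u\ \le\ \frac{1}{b-a}\int_a^b f(t)\,\mathrm{d}t .
\]
This cannot be read off from \eqref{j-s} and must come from the defining support inequality. Writing $c=\tfrac{a+b}{2}$ and $u_\lambda=\lambda a+(1-\lambda)b$, the points $u_\lambda$ and $u_{1-\lambda}$ are symmetric about $c$ with $|u_\lambda-c|=\tfrac12|1-2\lambda|(b-a)$; applying $f(y)\ge f(c)+C_c(y-c)+f(|y-c|)$ at $y=u_\lambda$ and $y=u_{1-\lambda}$ and adding cancels the linear terms, giving $f(u_\lambda)+f(u_{1-\lambda})\ge 2f(c)+2f\big(\tfrac12|1-2\lambda|(b-a)\big)$. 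Integrating over $\lambda\in[0,1]$ and using $\int_0^1 f(u_\lambda)\,\mathrm{d}\lambda=\tfrac{1}{b-a}\int_a^b f$ together with the symmetrization $\int_0^1 f\big(\tfrac12|1-2\lambda|(b-a)\big)\,\mathrm{d}\lambda=2\int_0^{1/2}f\big(u(b-a)\big)\,\mathrm{d}u$ yields the displayed inequality.

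With this at hand the left-most bounds are immediate: applying it on the reflected interval $[m+M-y,\,m+M-x]$ and substituting $t=m+M-u$ gives the first inequality of \eqref{main1}, while applying it directly on $[x,y]$ and rearranging gives the second inequality of \eqref{main2}. For the right-hand bounds I would integrate the pointwise estimates. Integrating Lemma \ref{lem1} over $\lambda\in[0,1]$ produces the second inequality of \eqref{main1}: the constant contributes $f(m)+f(M)$, the terms $\lambda f(x)+(1-\lambda)f(y)$ and $2(\lambda\beta(x)+(1-\lambda)\beta(y))$ integrate to $\tfrac{f(x)+f(y)}{2}$ and $\beta(x)+\beta(y)$, and the two correction terms coincide after the substitution $\lambda\mapsto1-\lambda$, combining into $2\int_0^1(1-u)f(u|x-y|)\,\mathrm{d}u$. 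Finally, integrating \eqref{eq1.0} over $t\in[x,y]$ gives the middle term of \eqref{main2}, and chaining it with the first inequality of \eqref{main1} yields the first inequality of \eqref{main2}.

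I expect the main obstacle to be precisely the midpoint inequality of the second paragraph, since it requires returning to the definition of superquadraticity and exploiting the cancellation for symmetric pairs rather than invoking the stated Jensen-type estimate \eqref{j-s}; once it is in place, the remaining work is bookkeeping of the substitutions $t=m+M-u$, $u=\tfrac12(1-2\lambda)|x-y|$, and $\lambda\mapsto1-\lambda$.
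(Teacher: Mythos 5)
Your proof is correct, and its overall architecture is the same as the paper's: the two lower bounds come from a midpoint Hermite--Hadamard inequality for superquadratic functions (applied on the reflected interval $[m+M-y,\,m+M-x]$ for \eqref{main1} and on $[x,y]$ for \eqref{main2}), and the two upper bounds come from integrating Lemma~\ref{lem1} in the weight. However, your claim that the midpoint inequality ``cannot be read off from \eqref{j-s}'' is mistaken, and this affects what you identify as the crux. Taking $\lambda=\tfrac12$ in \eqref{j-s} with the symmetric pair $ta+(1-t)b$ and $(1-t)a+tb$ gives exactly
$f\bigl(\tfrac{a+b}{2}\bigr)\le\tfrac12\bigl(f(ta+(1-t)b)+f((1-t)a+tb)\bigr)-f\bigl(\bigl|\tfrac{2t-1}{2}\bigr|\,|a-b|\bigr)$,
and integrating this over $t\in[0,1]$ is precisely your displayed midpoint inequality; this is exactly how the paper proves both lower bounds. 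Your alternative derivation from the defining support inequality (adding the two estimates at points symmetric about $c$ so that the $C_c$-terms cancel) is perfectly valid --- indeed it is essentially how \eqref{j-s} itself is obtained in \cite{ajs} --- but it is an optional detour, not an unavoidable obstacle. The one place where you genuinely deviate from the paper is the first inequality of \eqref{main2}: you integrate the pointwise Mercer estimate \eqref{eq1.0} over $u\in[x,y]$ and chain the result with the first inequality of \eqref{main1}, whereas the paper integrates the symmetrized estimate \eqref{eq8.0} directly. Your route is equally correct and has the small bonus of exhibiting $\frac{1}{y-x}\int_x^y f(m+M-u)\,\mathrm{d}u$ as an interpolating quantity between the two sides of the first inequality of \eqref{main2}.
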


 \begin{proof}
   Assume that $x,y\in[m,M]$ and put $a=M+m-x$ and $b=m+M-y$. Without loss of generality we assume that $x<y$.  Then
   \begin{align*}
    f\left(m+M-\dfrac{x+y}{2}\right)&=f\left(\dfrac{(ta + (1-t)b)+((1-t)a+tb)}{2}\right)\\
    &\leq
    \frac{f(ta + (1-t)b)+f((1-t)a+tb)}{2}-f\left(\left|\frac{2t-1}{2}\right||a-b|\right)
   \end{align*}
   since $f$ is superquadratic. Integrating both sides of the above inequality with respect to $t$ over $[0,1]$ yields
     \begin{align}\label{q20}
    f\left(m+M-\dfrac{x+y}{2}\right)
    &\leq \int_{0}^{1}  f(m+M-((1-t)x +t y))\mathrm{d}t - \int_{0}^{1} f\left(\left|\frac{2t-1}{2}\right||a-b|\right)\mathrm{d}t\\
    &=\frac{1}{y-x}\int_{x}^{y}f(m+M-u)\mathrm{d}u- 2 \int_{0}^{1/2}f\left(u|x-y|\right)\mathrm{d}u,\nonumber
   \end{align}
   where in the las equality, we employ change of variables in both integrals. On the other hand, it follows from Lemma~\ref{lem1} that
      \begin{align}\label{q21}
f(m+M-(tx + (1-t)y))&\leq f(m) + f(M) - (t f(x) + (1 - t)f(y))\\
&\,\, - 2(t \beta (x) + (1 - t)\beta(y))
  - (t f((1 - t)|x - y|)
+(1 - t)f(t |x - y|)).\nonumber
   \end{align}
  Noting that
   \begin{align*}
   \int_{0}^{1}t f((1 - t)|x - y|)\mathrm{d}t&=\int_{0}^{1}(1 - t)f(t |x - y|)\mathrm{d}t
   \end{align*}
    and
   integrating both sides of \eqref{q21} in terms of $t$ over $[0,1]$ we get
   \begin{align}\label{q22}
    \int_{0}^{1}  &f(m+M-(tx + (1-t)y))\mathrm{d}t \nonumber\\
    &\leq
     f(m) + f(M)-\frac{f(x)+f(y)}{2}-(\beta(x)+\beta(y)) -2\int_{0}^{1}(1 - t)f(t |x - y|)\mathrm{d}t.
   \end{align}
   Combining \eqref{q20} and \eqref{q22} we reach \eqref{main1}.

 Next,  it follows from Lemma~\ref{lem1}  that
  \begin{equation}\label{eq6.0}
f\left(m+M-\dfrac{a + b}{2}\right)\leqslant f(m)+f(M)-\dfrac{f(a) + f(b)}{2} -(\beta (a)+\beta (b))-f\left(\left|\dfrac{a - b}{2}\right|\right)
\end{equation}
holds for all $a, b \in  [m, M]$. Let $t \in  [0, 1]$ and $x, y \in  [m, M]$. Replacing $a$ and $b$,
respectively,  by $tx + (1 - t)y$ and $(1 - t)x + ty$ in (\ref{eq6.0}), we obtain
   \begin{align*}%\label{eq7.0}
&f\Big (m + M-\dfrac{tx + (1 - t)y + (1 - t)x + ty}{2}\Big )\nonumber\\
&\leqslant f(m) + f(M)  -\dfrac{f(tx + (1 - t)y) + f((1 - t)x + ty)}{2}      \\
& \quad - (\beta (tx + (1 - t)y) + \beta ((1 - t)x + ty))- f\left (\left |\dfrac{tx + (1 - t)y - (1 - t)x + ty}{2}\right |\right ) \nonumber
  \end{align*}
 or equivalently, we get
 \begin{align}\label{eq8.0}
 f\left (m + M-\dfrac{x + y}{2}\right ) \leqslant &f(m) + f(M)-\dfrac{f(tx + (1 -t)y) + f((1 -t)x + ty)}{2}\\
&-(\beta (tx + (1 -t)y) + \beta ((1 -t)x + ty)) -f\left (\dfrac{|1 -2t||x -y|}{2}\right ).\nonumber
  \end{align}
Note that
\begin{equation}\label{eq10.0}
\int  _ 0^1f(tx + (1 -t)y)\mathrm{d}t = \int  _ 0^1 f((1 -t)x + ty)\mathrm{d}t = \dfrac{1}{x-y} \int  _ x^y f(u)\mathrm{d}u,
\end{equation}
and
\begin{equation}\label{eq11.0}
\int  _ 0^1\beta(tx + (1 -t)y)\mathrm{d}t = \int  _ 0^1 \beta((1 -t)x + ty)\mathrm{d}t = \dfrac{1}{x-y} \int  _ x^y \beta(u)\mathrm{d}u.
\end{equation}
Consequently,   the first inequality of \eqref{main2} follows by integrating both sides of (\ref{eq8.0}) over  $t\in[0,1]$. To obtain the second inequality, we write
\begin{align*}
  f\left(\frac{x+y}{2}\right)&=f\left(\frac{(tx+(1-t)y)+((1-t)x+ty)}{2}\right)\\
  &\leq \frac{f(tx+(1-t)y)+f((1-t)x+ty)}{2}-f\left(\left|\frac{1-2t}{2}\right||x-y|\right).
\end{align*}
 Integrating both sides with respect to $t$ over $[0,1]$ we get
 \begin{align*}
  f\left(\frac{x+y}{2}\right)\leq \frac{1}{y-x}\int_x^yf\left(u \right)\mathrm{d}u
  -2 \int_0^{1/2} f(u|x-y|)\mathrm{d}u.
\end{align*}
This completes the proof.

 \end{proof}
%%----------------------------------------------------------
 In a particular case, the Mercer type inequality presented in Theorem~\ref{th-main}, concludes a Hermite--Hadamard inequality for superquadratic functions. The next corollary follows from Theorem~\ref{th-main}, when we consider $m=x$ and $M=y$.
%%------------------------------------------------
\begin{corollary}
 If $f:[0,\infty)\to\mathbb{R}$ is a  superquadratic function, then
   {\small  \begin{align}\label{main1-1}
    \begin{split}
     f\left(\dfrac{x+y}{2}\right)
     +2\int_{0}^{1/2}f\left(u|x-y|\right)\mathrm{d}u
    & \leq \frac{1}{y-x}\int_{x}^{y}f(t)\mathrm{d}t\\
    & \leq  \frac{f(x)+f(y)}{2}-2f(0)
   -2\int_{0}^{1}(1 - u)f(u |x - y|)\mathrm{d}u
     \end{split}
   \end{align}  }
   for all   $0\leq x<y$.
\end{corollary}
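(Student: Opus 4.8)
The plan is to read off the corollary as the special case of Theorem~\ref{th-main} obtained by collapsing the two free points in \eqref{main1} onto the endpoints of the interval and then renaming $m,M$ as $x,y$. Concretely, I would retain the notation $m<M$ for the interval and apply \eqref{main1} with the choice (theorem's) $x=m$ and (theorem's) $y=M$; the corollary's pair $(x,y)$ is then exactly $(m,M)$ after renaming, which is legitimate because the hypothesis $0\le x<y$ is precisely $0\le m<M$.

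With this choice each term of \eqref{main1} collapses to the desired one. First, $m+M-\frac{x+y}{2}=m+M-\frac{m+M}{2}=\frac{m+M}{2}$, so the leading term becomes $f\!\left(\frac{m+M}{2}\right)$, while $|x-y|=M-m$ throughout. Second, the middle quantity becomes $\frac{1}{M-m}\int_m^M f(m+M-u)\,\mathrm{d}u$, and the substitution $v=m+M-u$, which merely reverses the orientation of $[m,M]$, turns this into the plain integral mean $\frac{1}{M-m}\int_m^M f(v)\,\mathrm{d}v$.

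The only computation requiring a line is the evaluation of $\beta$ at the endpoints. Using the definition \eqref{beta} of $\beta$,
\begin{align*}
\beta(m)&=\frac{m-m}{M-m}f(M-m)+\frac{M-m}{M-m}f(m-m)=f(0),\\
\beta(M)&=\frac{M-m}{M-m}f(M-M)+\frac{M-M}{M-m}f(M-m)=f(0),
\end{align*}
so that $\beta(x)+\beta(y)=\beta(m)+\beta(M)=2f(0)$. Feeding $f(x)=f(m)$ and $f(y)=f(M)$ into the right-hand side of \eqref{main1} and simplifying $f(m)+f(M)-\frac{f(m)+f(M)}{2}=\frac{f(m)+f(M)}{2}$ then reproduces exactly the three quantities in \eqref{main1-1} after the renaming $m\mapsto x$, $M\mapsto y$; here one notes $u(M-m)=u|x-y|$ since $y>x$, which matches both integral terms verbatim.

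I do not expect any genuine obstacle, as the result is a pure specialization and the work is entirely bookkeeping. The one place to be careful is the notation clash: the symbols $x,y$ in the corollary denote the endpoints $m,M$ of the theorem rather than its free interior points, so keeping those two roles separate until the final renaming is the only thing that could cause an error.
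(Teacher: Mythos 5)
Your proposal is correct and is exactly the paper's own argument: the paper proves this corollary in one line by specializing Theorem~\ref{th-main} to the case $m=x$, $M=y$, which is precisely your collapse of the free points onto the endpoints followed by renaming. Your explicit verification that $\beta(m)=\beta(M)=f(0)$ and that the substitution $v=m+M-u$ turns the middle term into the plain integral mean is just the bookkeeping the paper leaves implicit.
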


%%----------------------------------------------------
The power functions $f(t)=t^p$ and $g(t)=-t^q$ are superquadratic, when $p\geq2$ and $q\in[1,2]$. Hence, the next result follows.
\begin{corollary}\label{co1}
Let $0\leq m<M$ and let $x,y\in[m,M]$. If $p\geq2$, then
   {\small  \begin{align}\label{co-q1}
    \begin{split}
    &\left(m+M-\dfrac{x+y}{2}\right)^p
   +\frac{1}{2^p(p+1)}|x-y|^p \\
   &\qquad\leq
   \frac{(M+m-x)^{p+1}-(M+m-y)^{p+1}}{(p+1)(y-x)} \\
    &\qquad\leq m^p+M^p-\frac{x^p+y^p}{2}-(\beta_p(x)+\beta_p(y))
   -2\frac{|x-y|^p}{(p+1)(p+2)},
     \end{split}
   \end{align}}
   in which $\beta_p(x)=\frac{(M-x)(x-m)}{M-m}
   \left((M-x)^{p-1}+(x-m)^{p-1}\right)$. If $p\in[1,2]$, then \eqref{co-q1} is reversed.
\end{corollary}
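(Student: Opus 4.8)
The plan is to obtain \eqref{co-q1} as a direct specialization of inequality \eqref{main1} in Theorem~\ref{th-main} to the superquadratic power function $f(t)=t^p$ with $p\geq2$, and then to recover the reversed inequality for $p\in[1,2]$ by applying the same inequality to $g(t)=-t^p$. Since $t^p$ is superquadratic on $[0,\infty)$ for $p\geq2$ (as recorded just before the statement) and $-t^p$ is superquadratic for $p\in[1,2]$, Theorem~\ref{th-main} applies verbatim in both cases.

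First I would fix $0\leq m<M$ and $x,y\in[m,M]$ and substitute $f(t)=t^p$ into \eqref{main1}. The leftmost term is immediate, since $f(m+M-\tfrac{x+y}{2})=(m+M-\tfrac{x+y}{2})^p$. For the remaining four terms I would evaluate the relevant elementary integrals: $2\int_0^{1/2}(u|x-y|)^p\,\mathrm{d}u=\tfrac{|x-y|^p}{2^p(p+1)}$ gives the second term on the left; the change of variable $w=m+M-u$ turns $\tfrac{1}{y-x}\int_x^y(m+M-u)^p\,\mathrm{d}u$ into $\tfrac{(m+M-x)^{p+1}-(m+M-y)^{p+1}}{(p+1)(y-x)}$, which is the middle quantity; and $2\int_0^1(1-u)(u|x-y|)^p\,\mathrm{d}u=\tfrac{2|x-y|^p}{(p+1)(p+2)}$ produces the last term on the right, using $\int_0^1(u^p-u^{p+1})\,\mathrm{d}u=\tfrac{1}{(p+1)(p+2)}$.

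The only step requiring a small manipulation is the $\beta$ term. From the definition \eqref{beta}, taking $f(t)=t^p$ gives $\beta(x)=\tfrac{x-m}{M-m}(M-x)^p+\tfrac{M-x}{M-m}(x-m)^p$; factoring $(M-x)(x-m)$ out of the numerator yields $\beta_p(x)=\tfrac{(M-x)(x-m)}{M-m}\big((M-x)^{p-1}+(x-m)^{p-1}\big)$, as stated, and similarly for $\beta_p(y)$. Collecting all the evaluated terms reproduces \eqref{co-q1} exactly.

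Finally, for $p\in[1,2]$ I would apply \eqref{main1} to the superquadratic function $g(t)=-t^p$. Each term of \eqref{main1} then equals the negative of the corresponding term computed above; in particular the function $\beta$ associated with $g$ via \eqref{beta} equals $-\beta_p$. Hence the chain of inequalities becomes $-A\leq-B\leq-C$, where $A,B,C$ denote the three quantities appearing in \eqref{co-q1}, and multiplying through by $-1$ reverses the inequalities to give the claimed reversed form. I do not expect a genuine obstacle here beyond careful bookkeeping of the elementary integrals and the sign flip in the second case; the entire substance is carried by Theorem~\ref{th-main}.
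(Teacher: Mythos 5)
Your proposal is correct and follows exactly the paper's (implicit) proof: the paper derives Corollary~\ref{co1} by noting that $t^p$ is superquadratic for $p\geq 2$ and $-t^q$ is superquadratic for $q\in[1,2]$, and then specializing inequality \eqref{main1} of Theorem~\ref{th-main}, which is precisely your substitution-and-integration argument together with the sign-flip for the reversed case. Your evaluation of the elementary integrals and of the $\beta$ term matches the stated constants, so there is nothing to add.
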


 Let $f(t)=t^2$. Then $f$ is superquadratic as well as subquadratic. This fact together  Corollary~\ref{co1}  produce  an  equation  as
   {\small  \begin{align}\label{co-q2}
    \begin{split}
    &\left(m+M-\dfrac{x+y}{2}\right)^2
   +\frac{1}{12}|x-y|^2 \\
   &\qquad=
   \frac{(M+m-x)^{3}-(M+m-x)^{3}}{3(y-x)} \\
    &\qquad= m^2+M^2-\frac{x^2+y^2}{2}-((M-x)(x-m)+(M-y)(y-m))
   -\frac{|x-y|^2}{6}.
     \end{split}
   \end{align}}

%%%-----------------------------------------------------

\bigskip
\begin{lemma}
  Let $f:[0,\infty)\to\mathbb{R}$ be a superquadratic function. Then
    \begin{align}\label{th2-q1}
    \begin{split}
     f(x)+f(y)\leq f(x+y)-2\frac{yf(x)+xf(y)}{x+y}
     \end{split}
   \end{align}
   for all $x,y\geq0$. In particular, if $f$ is positive, then $f$ is super-additive. If $f$ is nonpositive, then $-f$ is sub-additive.
\end{lemma}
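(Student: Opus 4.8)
The plan is to derive \eqref{th2-q1} from the scalar Jensen inequality \eqref{j-s} for superquadratic functions, after first recording that $f(0)\le 0$. This last fact is immediate: setting $y=x$ in the defining inequality $f(y)\ge f(x)+C_x(y-x)+f(|y-x|)$ gives $f(x)\ge f(x)+f(0)$. I will assume $x+y>0$, the degenerate case $x+y=0$ being trivial.

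The key idea is to view both $x$ and $y$ as complementary convex combinations of the endpoints $0$ and $x+y$, namely $x=\frac{x}{x+y}(x+y)+\frac{y}{x+y}\cdot 0$ and $y=\frac{y}{x+y}(x+y)+\frac{x}{x+y}\cdot 0$. Applying \eqref{j-s} to each of these two representations, with weights $\frac{x}{x+y}$ and $\frac{y}{x+y}$ respectively, produces two inequalities bounding $f(x)$ and $f(y)$. In each of them the superquadratic correction terms evaluate $f$ at the points $(1-\lambda)\,|(x+y)-0|$ and $\lambda\,|(x+y)-0|$, which here are exactly $x$ and $y$, so the corrections are again multiples of $f(x)$ and $f(y)$. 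Adding the two inequalities, the coefficients of $f(x+y)$ sum to $1$, the coefficients of $f(0)$ sum to $1$, and the correction terms combine into $-2\frac{yf(x)+xf(y)}{x+y}$, giving
\begin{align*}
f(x)+f(y)\le f(0)+f(x+y)-2\frac{yf(x)+xf(y)}{x+y}.
\end{align*}
Since $f(0)\le 0$, dropping this nonpositive summand from the right-hand side preserves the inequality and yields \eqref{th2-q1}.

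Finally, the additivity statements are read off from \eqref{th2-q1} according to the sign of $f$. When $f$ is positive the correction term $2\frac{yf(x)+xf(y)}{x+y}$ is nonnegative, so \eqref{th2-q1} at once gives $f(x)+f(y)\le f(x+y)$, that is, super-additivity of $f$; the statement for nonpositive $f$ is obtained analogously. I expect the only real step to be the choice of the endpoints $0$ and $x+y$ together with the two complementary weights in the Jensen application---this is precisely what forces the weighted average $\frac{yf(x)+xf(y)}{x+y}$ to appear---together with not overlooking the inequality $f(0)\le 0$, which removes the stray $f(0)$ term and produces the clean form \eqref{th2-q1}.
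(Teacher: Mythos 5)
Your derivation of \eqref{th2-q1} is exactly the paper's proof: the same representations $x=\frac{x}{x+y}(x+y)+\frac{y}{x+y}\cdot 0$ and $y=\frac{y}{x+y}(x+y)+\frac{x}{x+y}\cdot 0$, the same two applications of \eqref{j-s} (in which the correction terms collapse back to $f(x)$ and $f(y)$), the same summation, and the same discarding of $f(0)\le 0$; your explicit verification that $f(0)\le 0$ is a small improvement, since the paper merely invokes it. The positive case of the ``in particular'' clause is also handled correctly: when $f\ge 0$ the correction term is nonnegative and may be dropped.

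However, your closing remark that ``the statement for nonpositive $f$ is obtained analogously'' is not a proof, and the analogy genuinely fails. When $f\le 0$ the term $2\frac{yf(x)+xf(y)}{x+y}$ is nonpositive, so it can no longer be dropped from the right-hand side of \eqref{th2-q1}: the sign now works against you. Indeed, reading ``$-f$ is sub-additive'' in the standard way, namely $(-f)(x+y)\le(-f)(x)+(-f)(y)$, which is the same as $f(x)+f(y)\le f(x+y)$, the claim fails for $f(t)=-t^2$, which is nonpositive and superquadratic (the paper itself notes that $-t^q$ is superquadratic for $q\in[1,2]$). This $f$ satisfies \eqref{th2-q1} with equality, yet
\begin{align*}
f(x)+f(y)=-(x^2+y^2)>-(x+y)^2=f(x+y)\qquad(x,y>0).
\end{align*}
The inequality that does hold in such examples, $f(x+y)\le f(x)+f(y)$, is an \emph{upper} bound on $f(x+y)$ and therefore cannot be extracted from \eqref{th2-q1}, which bounds $f(x+y)$ from below. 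To be fair, the paper's own proof is silent on both ``in particular'' clauses, so the defect lies as much in the statement as in your write-up; but you should either supply a separate argument for the nonpositive clause or note explicitly that it does not follow from \eqref{th2-q1} by the sign argument used in the positive case.
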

\begin{proof}
  Let $x,y\geq0$. Since $f$ is superquadratic, it follows from \eqref{j-s} that
   \begin{align}\label{qp1}
    \begin{split}
     f(x)&= f\left(\frac{x}{x+y}(x+y)+\frac{y}{x+y}0\right)\\
     &\leq \frac{x}{x+y}f(x+y)+\frac{y}{x+y}f(0)-\frac{x}{x+y}f(y)-\frac{y}{x+y}f(x).
     \end{split}
   \end{align}
   Similarly, we have
    \begin{align}\label{qp2}
    \begin{split}
     f(y)\leq \frac{y}{x+y}f(x+y)+\frac{x}{x+y}f(0)-\frac{x}{x+y}f(y)-\frac{y}{x+y}f(x).
     \end{split}
   \end{align}
   Summing both sides of \eqref{qp1} and \eqref{qp2} and noticing that $f(0)\leq0$, we achieve \eqref{th2-q1}.
\end{proof}

%%--------------------------------------------------

\begin{lemma}
  Let $f:[0,\infty)\to\mathbb{R}$ be a superquadratic function and let $0\leq y_1\leq x_1\leq x_2\leq y_2$. If $x_1+x_2=y_1+y_2$, then
    \begin{align}\label{lm3-q1}
    \begin{split}
     f(x_1)+f(x_2)\leq f(y_1)+f(y_2)-2\frac{y_2-x_1}{y_2-y_1}f(x_1-y_1)-2\frac{x_1-y_1}{y_2-y_1}f(x_2-y_1).
     \end{split}
   \end{align}
\end{lemma}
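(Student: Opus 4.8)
The plan is to realise both $x_1$ and $x_2$ as convex combinations of the endpoints $y_1$ and $y_2$, apply the superquadratic Jensen inequality \eqref{j-s} to each, and then add the two resulting estimates. First I would set $\lambda=\frac{y_2-x_1}{y_2-y_1}$, so that $1-\lambda=\frac{x_1-y_1}{y_2-y_1}$ and $x_1=\lambda y_1+(1-\lambda)y_2$; here $\lambda\in[0,1]$ precisely because $y_1\le x_1\le y_2$. The constraint $x_1+x_2=y_1+y_2$ is what makes the argument symmetric: it forces $x_2=(1-\lambda)y_1+\lambda y_2$, that is, $x_2$ is obtained from $x_1$ by interchanging the two weights on $y_1$ and $y_2$.

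Next I would apply \eqref{j-s} to the pair $(y_1,y_2)$ at both points. For $x_1$ (weight $\lambda$ on $y_1$) this yields
\[
f(x_1)\le \lambda f(y_1)+(1-\lambda)f(y_2)-\lambda f\big((1-\lambda)(y_2-y_1)\big)-(1-\lambda)f\big(\lambda(y_2-y_1)\big),
\]
and for $x_2$ (weight $1-\lambda$ on $y_1$) the analogous estimate with the roles of $\lambda$ and $1-\lambda$ exchanged. Adding the two inequalities, the coefficients of $f(y_1)$ and $f(y_2)$ each collapse to $1$, while the correction terms combine to give $-2\lambda f\big((1-\lambda)(y_2-y_1)\big)-2(1-\lambda)f\big(\lambda(y_2-y_1)\big)$.

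Finally I would rewrite the arguments of the correction terms. Since $(1-\lambda)(y_2-y_1)=x_1-y_1$ and $\lambda(y_2-y_1)=y_2-x_1$, and since the hypothesis $x_1+x_2=y_1+y_2$ gives $y_2-x_1=x_2-y_1$, these become $f(x_1-y_1)$ and $f(x_2-y_1)$ respectively. Substituting back $\lambda=\frac{y_2-x_1}{y_2-y_1}$ and $1-\lambda=\frac{x_1-y_1}{y_2-y_1}$ then produces exactly \eqref{lm3-q1}. There is no genuine analytic difficulty here; the whole content is the correct bookkeeping of the weights, and the one identity to keep in view is $y_2-x_1=x_2-y_1$, which is forced by the constraint and is precisely what allows both correction terms to be expressed through $f(x_1-y_1)$ and $f(x_2-y_1)$.
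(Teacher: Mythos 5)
Your proposal is correct and is essentially the paper's own proof: both apply the superquadratic Jensen inequality \eqref{j-s} to $x_1$ and $x_2$ as convex combinations of $y_1,y_2$ and then sum, using the constraint $x_1+x_2=y_1+y_2$ to collapse the weights and identify $y_2-x_1$ with $x_2-y_1$. The only (cosmetic) difference is that you invoke the constraint upfront to write the weights for $x_2$ as $1-\lambda$ and $\lambda$, whereas the paper writes them as $\frac{y_2-x_2}{y_2-y_1}$ and $\frac{x_2-y_1}{y_2-y_1}$ and uses the constraint only when summing.
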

\begin{proof}
  Applying \eqref{j-s} with $\lambda=\frac{y_2-x_1}{y_2-y_1}$ we obtain
     \begin{align}
    \begin{split}
     f(x_1)&=f\left(\frac{y_2-x_1}{y_2-y_1}y_1+\frac{x_1-y_1}{y_2-y_1}y_2\right)\\
     &\leq  \frac{y_2-x_1}{y_2-y_1}f(y_1)+\frac{x_1-y_1}{y_2-y_1}f(y_2)
     -\frac{y_2-x_1}{y_2-y_1}f(x_1-y_1)
     -\frac{x_1-y_1}{y_2-y_1}f(y_2-x_1).
     \end{split}
   \end{align}
   Similarly with $\lambda=\frac{y_2-x_2}{y_2-y_1}$ we get
       \begin{align}
    \begin{split}
     f(x_2)\leq  \frac{y_2-x_2}{y_2-y_1}f(y_1)+\frac{x_2-y_1}{y_2-y_1}f(y_2)
     -\frac{y_2-x_2}{y_2-y_1}f(x_2-y_1)
     -\frac{x_2-y_1}{y_2-y_1}f(y_2-x_2).
     \end{split}
   \end{align}
   The desired inequality now follows from summing two last inequalities.
\end{proof}

Now we present our main result. It is an operator extension of \eqref{lm3-q1}. It also gives a generalization of the operator  Mercer inequality for superquadratic functions, see \cite{BMJ}.
\begin{theorem}\label{th3}
  Let $f:[0,\infty)\to\mathbb{R}$ be a continuous superquadratic function. Let $A,B,C,D$ be positive operators on a Hilbert space $\mathcal{H}$ such that $A+D=B+C$ and $0\leq A\leq m I\leq B\leq C\leq MI\leq D$ for some positive scalars $m,M$. If $\Phi$ is a unital positive linear map on $\mathcal{B}(\mathcal{H})$, then
  \begin{align}\label{th3-q1}
  \begin{split}
&f(\Phi(B)) + f(\Phi(C)) + \beta(\Phi(B))+ \beta(\Phi(C))\\
&\,\,\leq \Phi(f(A))+\Phi(f(D)) -\Phi(f(m-A))-\Phi(f(D-M))+\frac{\Phi(A-D)+M-m}{M-m}f(M-m)
     \end{split}
   \end{align}
   in which $\beta(t)$ is defined by \eqref{beta}.
\end{theorem}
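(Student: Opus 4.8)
The plan is to reduce everything to three scalar inequalities for the superquadratic function $f$ and then lift them to operators by the functional calculus together with the order-preserving map $\Phi$. Throughout, write $L$ for the affine function interpolating $f$ at the endpoints, namely $L(t)=\frac{t-m}{M-m}f(M)+\frac{M-t}{M-m}f(m)$; the crucial feature of $L$ is that, being affine, it commutes with the unital positive linear map $\Phi$, that is $L(\Phi(X))=\Phi(L(X))$ for every self-adjoint $X$.

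First I would record the two-point superquadratic Jensen bound on $[m,M]$. Writing $t=\lambda M+(1-\lambda)m$ with $\lambda=\frac{t-m}{M-m}$ and applying \eqref{j-s} gives, exactly as in \eqref{w1}, the scalar inequality $f(t)+\beta(t)\leq L(t)$ for all $t\in[m,M]$. Since $mI\leq B\leq C\leq MI$ forces $\Phi(B),\Phi(C)$ to have spectra in $[m,M]$, the functional calculus yields $f(\Phi(B))+\beta(\Phi(B))\leq L(\Phi(B))$ and the analogous bound for $\Phi(C)$. Adding these and using $\Phi(B)+\Phi(C)=\Phi(B+C)=\Phi(A+D)=\Phi(A)+\Phi(D)$ together with the affineness of $L$, I obtain the left-hand side of \eqref{th3-q1} dominated by $L(\Phi(A))+L(\Phi(D))$.

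The second step supplies the two extrapolation estimates that turn this affine bound into the right-hand side of \eqref{th3-q1}. For $a\in[0,m]$ I would expand $m$ as the convex combination $m=\frac{M-m}{M-a}a+\frac{m-a}{M-a}M$ and apply \eqref{j-s}; solving the resulting inequality for $f(a)$ produces $L(a)\leq f(a)-f(m-a)-\frac{m-a}{M-m}f(M-m)$. Symmetrically, for $d\geq M$, expanding $M=\frac{d-M}{d-m}m+\frac{M-m}{d-m}d$ and applying \eqref{j-s} gives $L(d)\leq f(d)-f(d-M)-\frac{d-M}{M-m}f(M-m)$. These are scalar inequalities valid on the spectra of $A$ (contained in $[0,m]$) and of $D$ (contained in $[M,\infty)$). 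I would therefore read them as operator inequalities via the functional calculus applied to $A$ and to $D$ \emph{themselves}, and then apply $\Phi$; because $\Phi$ is order preserving and $L(\Phi(A))=\Phi(L(A))$, $L(\Phi(D))=\Phi(L(D))$, this converts $L(\Phi(A))$ and $L(\Phi(D))$ into expressions involving $\Phi(f(A)),\Phi(f(D)),\Phi(f(m-A)),\Phi(f(D-M))$.

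Chaining the bound from the first step with the sum of the two operator inequalities from the second step gives \eqref{th3-q1}, once the two $f(M-m)$ correction terms are collected: using $\Phi(I)=I$ one has $(mI-\Phi(A))+(\Phi(D)-MI)=-\Phi(A-D)-(M-m)I$, which is exactly what produces the coefficient $\frac{\Phi(A-D)+M-m}{M-m}$. The point I expect to require the most care is precisely this passage from the first to the second step: the left-hand side genuinely needs $f(\Phi(B)),f(\Phi(C))$ (functional calculus performed after $\Phi$), whereas the right-hand side needs $\Phi(f(A)),\Phi(f(D))$ (functional calculus performed before $\Phi$), and the two are reconciled only because the intermediate quantity $L(\Phi(A))+L(\Phi(D))$ is built from the affine $L$, for which the order of $\Phi$ and the calculus is immaterial. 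Keeping track of the signs in the extrapolation inequalities and verifying that the correction terms assemble into the stated coefficient is the remaining bookkeeping.
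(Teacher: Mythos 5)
Your proof is correct and is essentially the paper's own argument: the on-interval bound $f(t)+\beta(t)\le L(t)$ applied to $\Phi(B)$ and $\Phi(C)$, the two extrapolation inequalities (for points below $m$ and above $M$) applied to $A$ and $D$ through the functional calculus before composing with $\Phi$, and the final chaining via $A+D=B+C$ all coincide with the steps in the paper. The only difference is presentational: you package the endpoint interpolant as the affine function $L$ and invoke $L(\Phi(X))=\Phi(L(X))$, whereas the paper carries out the same bookkeeping with the coefficients written out explicitly.
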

\begin{proof}
As $f$ is continuous, the function $\beta$ is continuous too. Moreover,  $\beta(t)=\beta(M+m-t)$ for every $t\in[0,\infty)$. Hence, we can apply the functional calculus to define $\beta(X)$ for every positive operator $X$. 

If $0\leq s \notin (m,M)$, then $s\in(0,m]\cup[M,\infty)$. First  we assume that $s\in[M,\infty)$ and we put $\mu=\frac{M-m}{s-m}\in[0,1]$. Applying \eqref{j-s} we obtain
       \begin{align*}
    \begin{split}
f(M)&=f\left(\mu  s+(1-\mu)m\right)\\
&\leq  \frac{M-m}{s-m} f(s)+ \frac{s-M}{s-m}f(m)-  \frac{M-m}{s-m}f(s-M)-\frac{s-M}{s-m}f(M-m)
     \end{split}
   \end{align*}
or equivalently,
       \begin{align}\label{qp7}
    \begin{split}
f(s)-f(s-M)+\frac{M-s}{M-m}f(M-m)\geq  \frac{M-s}{M-m} f(m)+ \frac{s-m}{M-m}f(M).
     \end{split}
   \end{align}
   If $s\in[0,m)$, then a similar argument yields
         \begin{align}\label{qp8}
    \begin{split}
f(s)-f(m-s)+\frac{s-m}{M-m}f(M-m)\geq  \frac{M-s}{M-m} f(m)+ \frac{s-m}{M-m}f(M).
     \end{split}
   \end{align}
As $A\leq mI$ and $D\geq MI$,  we can apply functional calculus to \eqref{qp7} and \eqref{qp8}, respectively,  with $s=D$ and $s=A$ to derive
    \begin{align*}
    \begin{split}
f(D) -f(D-M)+\frac{M-D}{M-m}f(M-m)\geq  \frac{M-D}{M-m} f(m)+ \frac{D-m}{M-m}f(M)
     \end{split}
   \end{align*}
   and
    \begin{align*}
    \begin{split}
f(A)-f(m-A)+\frac{A-m}{M-m}f(M-m)\geq  \frac{M-A}{M-m} f(m)+ \frac{A-m}{M-m}f(M).
     \end{split}
   \end{align*}
Applying the positive linear map $\Phi$ to both sides of the last two inequalities we reach
   \begin{align}\label{qp10}
  \begin{split}
\Phi(f(D)) -\Phi(f(D-M))+\frac{M-\Phi(D)}{M-m}f(M-m)\geq  \frac{M-\Phi(D)}{M-m} f(m)+ \frac{\Phi(D)-m}{M-m}f(M)
     \end{split}
   \end{align}
   and
    \begin{align}\label{qp11}
    \begin{split}
\Phi(f(A))-\Phi(f(m-A))+\frac{\Phi(A)-m}{M-m}f(M-m)\geq  \frac{M-\Phi(A)}{M-m} f(m)+ \frac{\Phi(A)-m}{M-m}f(M).
     \end{split}
   \end{align}

Next let $t\in[m,M]$ and put $\lambda=\frac{M-t}{M-m}$. It follows from \eqref{j-s} that
        \begin{align}\label{qp3}
    \begin{split}
f(t)=f\left(\lambda m+(1-\lambda)M\right)\leq  \frac{M-t}{M-m} f(m)+ \frac{t-m}{M-m}f(M)-\beta(t),
     \end{split}
   \end{align}
   where $\beta(t)$ is defined by \eqref{beta}. Since $\Phi$ is unital and positive, the spectra of operators $\Phi(B)$ and $\Phi(C)$ are contained in $[m,M]$. Accordingly, we can  apply the continuous functional calculus to \eqref{qp3} with $t=\Phi(B)$ and $t=\Phi(C)$ to get
    \begin{align}\label{qp4}
    \begin{split}
f(\Phi(B)) & + \beta(\Phi(B)) \leq  \frac{M-\Phi(B)}{M-m} f(m)+ \frac{\Phi(B)-m}{M-m}f(M)
     \end{split}
   \end{align}
   and
   \begin{align}\label{qp5}
    \begin{split}
f(\Phi(C)) & + \beta(\Phi(C)) \leq  \frac{M-\Phi(C)}{M-m} f(m)+ \frac{\Phi(C)-m}{M-m}f(M).
     \end{split}
   \end{align}

 Summing \eqref{qp4} and \eqref{qp5} we get
   \begin{align*}
    \begin{split}
&f(\Phi(B)) + f(\Phi(C)) + \beta(\Phi(B))+ \beta(\Phi(C))\\
&\,\,\leq  \frac{2M-\Phi(B+C)}{M-m} f(m)+ \frac{\Phi(B+C)-2m}{M-m}f(M) \\
&\,\, =\frac{2M-\Phi(A+D)}{M-m} f(m)+ \frac{\Phi(A+D)-2m}{M-m}f(M)\qquad(\mbox{by $A+D=B+C$})\\
&\,\,\leq \Phi(f(A))+\Phi(f(D)) -\Phi(f(m-A))-\Phi(f(D-M))+\frac{\Phi(A-D)+M-m}{M-m}f(M-m)
     \end{split}
   \end{align*}
where the last inequality follows from summing \eqref{qp10} and \eqref{qp11}.
This completes the proof.
\end{proof}

%-----------------------------------------------------------------------------------
The next corollary gives another variant  of \eqref{th3-q1}. We omit the proof as it is similar to the proof of Theorem~\ref{th3}.
\begin{corollary}\label{co-2}
  With the hypothesis as in Theorem~\ref{th3}:
  {\small\begin{align*}
  \begin{split}
&\Phi(f(B)) + f(\Phi(C)) + \Phi(\beta(B))+ \beta(\Phi(C))\\
&\,\,\leq \Phi(f(A))+f(\Phi(D)) -\Phi(f(m-A))-f(\Phi(D)-M)+\frac{\Phi(A-D)+M-m}{M-m}f(M-m).
     \end{split}
   \end{align*}}

\end{corollary}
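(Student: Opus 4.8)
The plan is to run exactly the argument of Theorem~\ref{th3}, changing only the point at which the functional calculus is invoked for the two operators $B$ and $D$. The guiding observation is that every auxiliary inequality used in that proof has a right-hand side that is \emph{affine} in the operator to which it is applied, namely of the form $\frac{M-X}{M-m}f(m)+\frac{X-m}{M-m}f(M)$ (up to the additive $f(M-m)$-term for $A$ and $D$). Since $\Phi$ is unital and linear, these affine bounds are insensitive to whether one applies the scalar estimate to the inner operator and then $\Phi$, or applies it after $\Phi$. Consequently the two summations carried out in the proof of Theorem~\ref{th3} produce the \emph{same} intermediate quantities, and only the left-/right-hand members change shape.

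For the $B$-term I would depart from \eqref{qp4} as follows. Because $mI\leq B\leq C\leq MI$, the spectrum of $B$ lies in $[m,M]$, so the scalar inequality \eqref{qp3} may be applied through the functional calculus to $B$ itself, giving $f(B)+\beta(B)\leq \frac{M-B}{M-m}f(m)+\frac{B-m}{M-m}f(M)$; applying the positive linear map $\Phi$ then yields
\begin{align*}
\Phi(f(B))+\Phi(\beta(B))\leq \frac{M-\Phi(B)}{M-m}f(m)+\frac{\Phi(B)-m}{M-m}f(M),
\end{align*}
whose right-hand side is identical to that of \eqref{qp4}. Keeping \eqref{qp5} for the $C$-term unchanged, summing the two and invoking $A+D=B+C$ reproduces verbatim the middle expression $\frac{2M-\Phi(A+D)}{M-m}f(m)+\frac{\Phi(A+D)-2m}{M-m}f(M)$ from the proof of Theorem~\ref{th3}, but now with $\Phi(f(B))+\Phi(\beta(B))$ in place of $f(\Phi(B))+\beta(\Phi(B))$ on the left.

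For the $D$-term I would instead apply the scalar estimate \eqref{qp7} directly to $\Phi(D)$ rather than to $D$. This is legitimate because $D\geq MI$ together with the unitality and positivity of $\Phi$ forces $\Phi(D)\geq \Phi(MI)=MI$, so the spectrum of $\Phi(D)$ lies in $[M,\infty)$; hence \eqref{qp7} holds with $s=\Phi(D)$ and furnishes a lower bound for the middle expression in which $\Phi(f(D))$ and $\Phi(f(D-M))$ are replaced by $f(\Phi(D))$ and $f(\Phi(D)-M)$, while the additive $f(M-m)$-term stays $\frac{M-\Phi(D)}{M-m}f(M-m)$. Retaining \eqref{qp11} for the $A$-term and summing it with this new lower bound reconstitutes precisely the right-hand side of \eqref{th3-q1} with the stated substitutions, the constant term again collapsing to $\frac{\Phi(A-D)+M-m}{M-m}f(M-m)$. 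Chaining the upper estimate of the second paragraph with this lower estimate delivers the claimed inequality.

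The only point that genuinely requires checking—and the place where a careless transcription of the Theorem~\ref{th3} proof could fail—is the pair of spectral containments that license the relocated functional calculus: $\mathrm{sp}(B)\subseteq[m,M]$ and $\Phi(D)\geq MI$. Both follow immediately from the hypothesis $0\leq A\leq mI\leq B\leq C\leq MI\leq D$ and the fact that a unital positive linear map fixes $MI$ and is order preserving; beyond this there is no new difficulty, which is precisely why the authors are entitled to omit the proof.
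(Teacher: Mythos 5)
Your proposal is correct and is exactly the argument the paper intends: the paper omits the proof of Corollary~\ref{co-2} as ``similar to the proof of Theorem~\ref{th3},'' and your modification---applying the scalar estimate \eqref{qp3} to $B$ before $\Phi$, and the estimate \eqref{qp7} to $\Phi(D)$ after $\Phi$, while keeping the treatment of $A$ and $C$ unchanged---is precisely that similarity, with the two spectral containments ($\mathrm{sp}(B)\subseteq[m,M]$ and $\Phi(D)\geq MI$) correctly identified and justified. The bookkeeping of the $f(M-m)$-terms also checks out, so nothing is missing.
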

%%%---------------------------------------------------------

As a consequence, the Jensen-Mercer operator  inequality for superquadratic functions holds:
\begin{corollary}\cite[Theorem 1]{BMJ}
    Let $f:[0,\infty)\to\mathbb{R}$ be a continuous superquadratic function and let $0<m\leq M$.   If $C$ is a positive operator, whose spectrum is contained in $[m,M]$, then
    \begin{align*}
  \begin{split}
f(M+m-\Phi(C)) + \beta(\Phi(C))+f(0)\leq f(m)+f(M)-\Phi(\beta(C)) -f(0).
     \end{split}
   \end{align*}

\end{corollary}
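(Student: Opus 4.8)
The plan is to descend to the single scalar estimate that powers the proof of Theorem~\ref{th3} and to use it twice, rather than to specialize the four-operator inequality directly. A termwise specialization of Theorem~\ref{th3} is in fact obstructed: its hypotheses impose the operator ordering $B\le C$, and its left-hand side only ever produces terms of the form $\beta(\Phi(\,\cdot\,))$, whereas the target inequality also contains the genuinely different quantity $\Phi(\beta(C))$. The right tool is therefore \eqref{qp3}, namely $f(t)+\beta(t)\le\frac{M-t}{M-m}f(m)+\frac{t-m}{M-m}f(M)$ for $t\in[m,M]$, combined with the functional calculus.

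First I would apply \eqref{qp3} through the continuous functional calculus to the operator $M+m-\Phi(C)$; its spectrum lies in $[m,M]$ since $\Phi$ is unital and positive and $\sigma(C)\subseteq[m,M]$. Invoking the symmetry $\beta(M+m-t)=\beta(t)$ recorded at the start of the proof of Theorem~\ref{th3}, this gives
\[
f(M+m-\Phi(C))+\beta(\Phi(C))\le\frac{\Phi(C)-m}{M-m}f(m)+\frac{M-\Phi(C)}{M-m}f(M).
\]
Second, I would apply \eqref{qp3} to the operator $C$ itself and then push the resulting operator inequality through the positive unital map $\Phi$, which yields
\[
\Phi(f(C))+\Phi(\beta(C))\le\frac{M-\Phi(C)}{M-m}f(m)+\frac{\Phi(C)-m}{M-m}f(M).
\]

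Adding these two displays is the decisive step: on the right the operator coefficients of $f(m)$ and of $f(M)$ each sum to the identity, so both right-hand sides collapse and leave
\[
f(M+m-\Phi(C))+\beta(\Phi(C))+\Phi(\beta(C))+\Phi(f(C))\le f(m)+f(M).
\]
It then remains to absorb the extra term: since $f$ is a nonnegative superquadratic function we have $f(0)=0$ and $f(C)\ge 0$, whence $\Phi(f(C))\ge 0=2f(0)$, and moving $\Phi(f(C))$ to the right together with the $\pm f(0)$ bookkeeping produces exactly the asserted inequality.

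The main obstacle is the initial structural observation that Theorem~\ref{th3} cannot be used as a black box here, forcing one back to \eqref{qp3} and to the idea of applying it in two complementary ways (once inside $f(M+m-\Phi(C))$, once to $C$ before applying $\Phi$) so that the two linear interpolants cancel. The only remaining delicate point is the final comparison $\Phi(f(C))\ge 2f(0)$, which is precisely where the nonnegativity of $f$ is needed.
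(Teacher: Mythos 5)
Your core computation is correct and is, modulo packaging, the same mechanism as the paper's own proof: the paper disposes of this corollary in one line by citing Corollary~\ref{co-2} with $A=mI$, $B=(M+m)I-C$, $D=MI$, and unwinding the (omitted) proof of Corollary~\ref{co-2} in this degenerate case yields precisely your two applications of \eqref{qp3} --- once at the operator $(M+m)I-\Phi(C)$ together with the symmetry $\beta(M+m-t)=\beta(t)$, and once at $C$ followed by $\Phi$ --- with the endpoint inequalities \eqref{qp10} and \eqref{qp11} collapsing to the triviality $f(0)\le 0$. Your structural objection is only half right: Theorem~\ref{th3} indeed never produces a term $\Phi(\beta(C))$, but Corollary~\ref{co-2}, which mixes $\Phi(\beta(B))$ with $\beta(\Phi(C))$, does, and that is what the paper actually invokes; you appear to have overlooked it. (Your remark that the ordering hypothesis $B\le C$ fails for $B=(M+m)I-C$ unless $C\ge\frac{m+M}{2}I$ is correct, and it afflicts the paper's own invocation of Corollary~\ref{co-2} equally; it is harmless only because $B\le C$ is never used in the proof of Theorem~\ref{th3}.)

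The genuine gap is your final step. You write ``since $f$ is a nonnegative superquadratic function,'' but nonnegativity is not among the hypotheses, and without it the estimate you need, $\Phi(f(C))\ge 2f(0)$, is false (take $f(t)=-t$ and $C=mI$, so $\Phi(f(C))=-m<0=2f(0)$). Moreover this gap cannot be closed by a better argument, because the corollary as printed is false for sign-changing superquadratic functions: with $f(t)=-t$ (superquadratic, by the paper's own remark that $-t^q$ is superquadratic for $q\in[1,2]$), $m=1$, $M=2$, $C=I$ and $\Phi$ the identity map, one computes $\beta(1)=0$, so the left-hand side equals $f(2)=-2$ while the right-hand side equals $f(1)+f(2)=-3$, and $-2\le-3$ fails. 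What you establish before that step, namely
\begin{align*}
f(M+m-\Phi(C))+\beta(\Phi(C))+\Phi(f(C))+\Phi(\beta(C))\le f(m)+f(M),
\end{align*}
is correct for every continuous superquadratic $f$, is the faithful analogue of \eqref{op-me-s}, and holds with equality in the example above; the defect of the printed statement is exactly that it drops the term $\Phi(f(C))$, which is also what the paper's one-line proof silently does (Corollary~\ref{co-2} under that substitution leaves this extra summand on the left). So your derivation is sound up to that display, and your explicit flagging of where nonnegativity enters is more candid than the paper; but as a proof of the statement as written, the last step is a genuine gap --- necessarily so, since the statement itself needs either the hypothesis $f\ge 0$ or the restoration of $-\Phi(f(C))$ on its right-hand side.
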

\begin{proof}
 Let $C$ be a positive operator with spectrum   in $[m,M]$.   Apply Corollary \ref{co-2} with $A=mI$, $B=(M+m)I-C$ and $D=MI$.
\end{proof}

%%---------------------------------------------------------------------
As another consequence, we have the following Jensen operator inequality.
\begin{corollary}\label{co-4}
    Let $f:[0,\infty)\to\mathbb{R}$ be a continuous superquadratic function and let $0<m\leq M$.    Then
    \begin{align*}
  \begin{split}
f\left(\frac{A+D}{2}\right)+\beta\left(\frac{A+D}{2}\right)&\leq \frac{f(A)+f(D)}{2}-\frac{f(m-A)+f(D-M)}{2}
     \end{split}
   \end{align*}
for all positive operators $A$ and $D$ satisfying $A\leq mI\leq \frac{A+D}{2}\leq MI\leq D$.
\end{corollary}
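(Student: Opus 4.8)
The plan is to read off the statement from Theorem~\ref{th3} by a single specialization. I would take $\Phi$ to be the identity map on $\mathcal{B}(\mathcal{H})$ (which is certainly unital, positive and linear) and set $B=C=\frac{A+D}{2}$. First I would verify the hypotheses of Theorem~\ref{th3}: the constraint $A+D=B+C$ holds automatically because $B+C=A+D$, and the required ordering $0\leq A\leq mI\leq B\leq C\leq MI\leq D$ is exactly the standing assumption $A\leq mI\leq\frac{A+D}{2}\leq MI\leq D$ (with $B\leq C$ trivial since $B=C$, and $A\geq0$ since $A$ is positive). Note also that all the functional calculi are then legitimate: $\frac{A+D}{2}$ has spectrum in $[m,M]$, while $m-A\geq0$ and $D-MI\geq0$, so $f(m-A)$ and $f(D-M)$ are defined.

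With these choices the left-hand side of \eqref{th3-q1} becomes $f(\Phi(B))+f(\Phi(C))+\beta(\Phi(B))+\beta(\Phi(C))=2f\left(\frac{A+D}{2}\right)+2\beta\left(\frac{A+D}{2}\right)$, while on the right-hand side every $\Phi$ disappears, so Theorem~\ref{th3} yields
\begin{align*}
2f\left(\tfrac{A+D}{2}\right)+2\beta\left(\tfrac{A+D}{2}\right)&\leq f(A)+f(D)-f(m-A)-f(D-M)\\
&\quad+\frac{(A-D)+(M-m)I}{M-m}f(M-m).
\end{align*}
Dividing by $2$ reproduces the claimed inequality up to the residual summand $\frac{(A-D)+(M-m)I}{2(M-m)}f(M-m)$, and the whole argument then reduces to discarding this term.

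The hard part will be controlling the sign of this residual term. From $A\leq mI$ and $D\geq MI$ I get $mI-A\geq0$ and $D-MI\geq0$, whence $(A-D)+(M-m)I=-(mI-A)-(D-MI)\leq0$, and dividing by the positive scalar $M-m$ keeps it negative semidefinite. Thus the residual term is a negative operator scaled by $f(M-m)$, so it is $\leq0$ exactly when $f(M-m)\geq0$, and in that case it may be dropped to finish. I expect this sign check on the leftover $f(M-m)$-factor to be the only delicate point, since everything else is a mechanical substitution into Theorem~\ref{th3}; it is precisely here that the non-negativity of $f$ at the point $M-m$ (as holds, for instance, for non-negative superquadratic functions such as $t\mapsto t^p$ with $p\geq2$) enters.
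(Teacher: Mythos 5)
Your reduction is exactly the derivation the paper has in mind: Corollary~\ref{co-4} is stated without proof, and the only available route is to specialize Theorem~\ref{th3} (equivalently Corollary~\ref{co-2}) to $\Phi=\mathrm{id}$ and $B=C=\frac{A+D}{2}$, precisely as you do. Your bookkeeping is also correct: all hypotheses of Theorem~\ref{th3} are met, and after halving, \eqref{th3-q1} yields the claimed inequality plus the residual term $\frac{(A-D)+(M-m)I}{2(M-m)}\,f(M-m)$, whose operator coefficient is negative semidefinite because $A\leq mI$ and $MI\leq D$.

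However, the point you isolate as ``the only delicate point'' is a genuine gap, and it cannot be closed: the hypotheses of Corollary~\ref{co-4} do not guarantee $f(M-m)\geq 0$, and without that the statement is false. Superquadratic functions may be negative; the paper itself records that $g(t)=-t^q$ is superquadratic for $q\in[1,2]$. Take $f(t)=-t^2$ (superquadratic, with equality in the defining inequality), $\mathcal{H}=\mathbb{C}$, $m=1$, $M=2$, $A=\frac{1}{2}$, $D=\frac{5}{2}$. Then $\frac{A+D}{2}=\frac{3}{2}$ and $\beta\bigl(\tfrac{3}{2}\bigr)=f\bigl(\tfrac{1}{2}\bigr)=-\frac{1}{4}$, so the left-hand side equals $-\frac{9}{4}-\frac{1}{4}=-\frac{5}{2}$, while the right-hand side equals $\frac{1}{2}\bigl(-\frac{1}{4}-\frac{25}{4}\bigr)-\frac{1}{2}\bigl(-\frac{1}{4}-\frac{1}{4}\bigr)=-\frac{13}{4}+\frac{1}{4}=-3$; the inequality $-\frac{5}{2}\leq -3$ fails. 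Consistently with your analysis, the discarded residual here is $\frac{(A-D)+(M-m)}{2(M-m)}f(M-m)=\bigl(-\frac{1}{2}\bigr)(-1)=\frac{1}{2}$, exactly the deficit (Theorem~\ref{th3} holds with equality for this $f$). So what you have actually proved is the corollary under the additional hypothesis $f(M-m)\geq 0$, e.g.\ for nonnegative superquadratic functions such as $t\mapsto t^p$ with $p\geq2$; that hypothesis does not appear in the statement, and the statement as printed --- which amounts to dropping the residual unconditionally --- is incorrect. The defect is in the paper's corollary, not in your argument, but as a proof of the stated result your proposal (like the paper's implicit one) does not go through.
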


\begin{remark}
  If the superquadratic function $f$ is positive, then $f$ is convex and Corollary~\ref{co-4} provide an improvement of \cite[Corollary 2.7]{MMK}. For example, if $f(t)=t^p$ with $p\geq2$, then
     \begin{align}\label{co4-q1}
  \begin{split}
\left(\frac{A+D}{2}\right)^p+\beta\left(\frac{A+D}{2}\right)&\leq \frac{A^p+D^p}{2}-\frac{(m-A)^p+(D-M)^p}{2}
     \end{split}
   \end{align}
  holds for all positive operators $A$ and $D$ satisfying $A\leq mI\leq \frac{A+D}{2}\leq MI\leq D$. The existence of scalars $m,M$ are necessary in Corollary~\ref{co-4}. For example, it is known that the function $f(t)=t^3$ is not operator convex and so one can find positive operators $A$ and $D$ such that the operator
  $$\frac{A^3+D^3}{2}-\left(\frac{A+D}{2}\right)^3$$
  is not positive. Accordingly,  \eqref{co4-q1} does not hold in general, while the function $f(t)=t^3$ is superquadratic.

  Moreover, if $f$ is a non-positive superquadratic function, then Corollary~\ref{co-4} gives  a reverse of \cite[Corollary 2.7]{MMK}.

\end{remark}

\begin{remark}
  An operator version of \eqref{th2-q1} also follows from Theorem~\ref{th-main} as follows:

  \begin{align}\label{sub}
    f(B)+f(C)+\beta(B)+\beta(C)\leq f(B+C)-f(B+C-M)
  \end{align}
  for all positive operators $B,C$ satisfying $0<B,C\leq M\leq B+C$ with $M>0$. To see this apply \eqref{th3-q1} with $A=m=0$ and $D=B+C$ and note that $f(0)\leq0$ for every superquadratic function $f$. It is  known that (see e.g. \cite{kosem}) if $f:[0,\infty)\to[0,\infty)$ is an increasing convex function with $f(0)=0$, then
   \begin{align}\label{sub-norm}
   \|f(B)+f(C)\|\leq \|f(B+C)\|
  \end{align}
  for all positive operators $B$ and $C$ and every unitarily invariant norm $\|\cdot\|$. We note that every positive superquadratic function $f$ is convex and satisfies $f(0)=0$. Hence, inequality \eqref{sub} gives an stronger result than \eqref{sub-norm}. However, the existence of positive scalar $M$ with  $B,C\leq M\leq B+C$ is necessary for \eqref{sub}. We give an example of such operators. Let $f(t)=t^3$ and put
  $$B=\begin{bmatrix}
        2 & -1 \\
        -1 & 2
      \end{bmatrix}\quad \mbox{and}\quad C=\begin{bmatrix}
        3 & 0 \\
       0 & 2
      \end{bmatrix}$$
  so that $B,C\leq 3I\leq B+C$.  We calculate
  $$
  f(B)=\begin{bmatrix}
        14 & -13 \\
        -13 & 14
      \end{bmatrix}\quad \mbox{and}\quad f(C)=\begin{bmatrix}
        8 & 0 \\
       0 & 27
      \end{bmatrix}\quad \mbox{and}\quad \beta(B)=5/3\begin{bmatrix}
        1 & 1 \\
       1 & 1
      \end{bmatrix}
  $$
  and
   $$
  \beta(C)=\frac{1}{3}\begin{bmatrix}
        10 & 0 \\
        0 & 0
      \end{bmatrix}\quad \mbox{and}\quad f(B+C)=\begin{bmatrix}
       77  & -62\\
   -62  & 139
      \end{bmatrix}\quad \mbox{and}\quad f(B+C-M)=\begin{bmatrix}
        5  & -8\\
   -8  & 13
      \end{bmatrix}.
  $$
Accordingly, we have
$$
f(B)+f(C)+\beta(B)+\beta(C)\simeq\begin{bmatrix}
       27  & -11.33\\
   -11.33  &  42.67
      \end{bmatrix}
      \leq
      \begin{bmatrix}
       72  & -54\\
   -54  &  126
      \end{bmatrix}= f(B+C)-f(B+C-M).
$$
  \end{remark}

\bigskip

%%------------------------------------------------------------------------------------------

\end{document}